\date{22 July 2014}
\title[Graded-Injective Modules]{Another Proof of a Theorem of Van den Bergh 
about Graded-Injective Modules}
\author{Amnon Yekutieli}
\address{Department of  Mathematics,
Ben Gurion University, Be'er Sheva 84105, Israel}
\email{amyekut@math.bgu.ac.il}
\newtheorem{thm}[equation]{Theorem}
\newtheorem{prop}[equation]{Proposition}
\newtheorem{lem}[equation]{Lemma}
\theoremstyle{definition}
\newtheorem{rem}[equation]{Remark}
\newtheorem{exa}[equation]{Example}
\newcommand{\xar}{\xrightarrow}
\newcommand{\opn}{\operatorname}
\newcommand{\cat}[1]{\operatorname{\mathsf{#1}}}
\newcommand{\cd}{\,{\cdot}\,}
\newcommand{\mfrak}[1]{\mathfrak{#1}}
\newcommand{\mrm}[1]{\mathrm{#1}}
\newcommand{\m}{\mfrak{m}}
\renewcommand{\k}{\Bbbk}
\newcommand{\Z}{\mathbb{Z}}
\newcommand{\N}{\mathbb{N}}
\newcommand{\ot}{\otimes}
\newcommand{\til}[1]{\tilde{#1}}
\newcommand{\lb}{\linebreak}
\begin{document}

\maketitle

Suppose $A = \bigoplus_{i \in \N} A_i$ is a left noetherian $\N$-graded ring.
The category of left $\Z$-graded $A$-modules is denoted by $\cat{GrMod} A$. 
Recall that the objects of $\cat{GrMod} A$ are the left graded $A$-modules $M = 
\bigoplus_{i \in \Z} M_i$, and the morphisms are the $A$-linear homomorphisms of 
degree $0$. A {\em graded-injective module} is an injective object $I$ in the 
abelian category $\cat{GrMod} A$. 

The following result is stated as a ``pleasant exercise in homological 
algebra'' in \cite{VdB}. Its proof was communicated to us privately by M. Van 
den Bergh many years ago, and we referred to it as ``quite involved'' in 
\cite[Remark 4.9]{YZ1}. 
The purpose of this note is to give a modified proof of this result.

\begin{thm}[Van den Bergh] \label{thm:1}
Assume $A$ is a left noetherian $\N$-graded ring. Let $I$ be a graded-injective 
left $A$-module. Then the injective dimension of $I$ in the ungraded sense is 
at most $1$. 
\end{thm}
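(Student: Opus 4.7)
The plan is to resolve every ungraded $A$-module $M$ by a two-term complex of ungraded modules coming from $\cat{GrMod} A$, and then compute the relevant ungraded Ext groups via an adjunction. For $M \in \cat{Mod} A$, define the graded $A$-module
\begin{equation*}
\opn{Gr}(M) := \bigoplus_{i \in \Z} M^{(i)},
\end{equation*}
where $M^{(i)}$ is a copy of $M$ placed in degree $i$ and $a \in A_d$ acts by $a \cdot m^{(i)} := (am)^{(i+d)}$. Writing $U : \cat{GrMod} A \to \cat{Mod} A$ for the forgetful functor, the augmentation $\epsilon : U(\opn{Gr}(M)) \to M$, $m^{(i)} \mapsto m$, is an $A$-linear surjection whose kernel is the image of the injective ``telescope'' map $\psi : U(\opn{Gr}(M)) \to U(\opn{Gr}(M))$, $m^{(i)} \mapsto m^{(i)} - m^{(i+1)}$. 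This produces a functorial short exact sequence
\begin{equation*}
0 \to U(\opn{Gr}(M)) \xrightarrow{\psi} U(\opn{Gr}(M)) \xrightarrow{\epsilon} M \to 0
\end{equation*}
in $\cat{Mod} A$, so the long exact sequence of $\opn{Ext}_A^\bullet(-, I)$ reduces the theorem to the vanishing $\opn{Ext}_A^i(U(N), I) = 0$ for all $i \geq 1$ and all graded $N$.

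Next, $\opn{Gr}$ is right adjoint to $U$: a degree-$0$ graded map $N \to \opn{Gr}(P)$ assigns to each $n \in N_i$ an element of $P$, and the collected data is the same as an ungraded $A$-linear map $U(N) \to P$. Since $U$ is exact and carries graded-frees to ungraded-frees (hence graded projectives to projectives), a graded projective resolution $P^\bullet \to N$ yields an ungraded projective resolution $U(P^\bullet) \to U(N)$. Applying the adjunction termwise and passing to cohomology gives
\begin{equation*}
\opn{Ext}_A^i(U(N), I) \;\cong\; \opn{Ext}_{\cat{GrMod} A}^i(N, \opn{Gr}(I)),
\end{equation*}
so it suffices to show that $\opn{Gr}(I)$ is graded-injective.

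The final step is a reindexing. By construction, $\opn{Gr}(I)$ has underlying bigraded abelian group $\bigoplus_{i, k \in \Z} I_k$, with the $(i, k)$-piece in outer degree $i$, and the action of $a \in A_d$ shifts both $i$ and $k$ by $d$. Setting $j := k - i$ makes $j$ invariant under the action, and identifies the $j$-th slice as the shifted graded module $I(j)$; hence $\opn{Gr}(I) \cong \bigoplus_{j \in \Z} I(j)$ in $\cat{GrMod} A$. Each $I(j)$ is graded-injective, and the left-noetherian hypothesis on $A$ is used precisely here, through the graded analogue of Bass--Papp, to conclude that $\bigoplus_j I(j)$ remains graded-injective. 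The main obstacle is this last identification together with the verification that the adjunction $U \dashv \opn{Gr}$ really commutes with derived functors; everything else (exactness of the telescope, $U$ preserving projectives, and graded Bass--Papp) is routine.
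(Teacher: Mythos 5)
Your proof is correct, but it takes a genuinely different route from the one in the paper. The paper passes to the Rees ring $\til{A}$ and works in derived categories: the functors $\opn{rs}$, $\opn{sp}_0$, $\opn{sp}_1$ translate the ungraded $\opn{Ext}$ computation into a graded one over $\til{A}$, and the dimension jump is isolated in Lemma \ref{lem:3}, namely the fact that $\opn{Lsp}_0$ has cohomological dimension $\leq 1$ because $A \cong \til{A}/t \cd \til{A}$ with $t$ a non-zero-divisor. You instead stay inside elementary homological algebra: your coinduction functor $\opn{Gr}$, being right adjoint to the exact, projective-preserving forgetful functor $U$, converts $\opn{Ext}^i_A(U(N), I)$ into $\opn{Ext}^i_{\cat{GrMod} A}(N, \opn{Gr}(I))$; the reindexing $\opn{Gr}(I) \cong \bigoplus_{j} I(j)$ reduces everything to the graded Bass--Papp theorem, which is exactly where left noetherianness enters (playing the role of the paper's appeal to the noetherianness of $\til{A}$); and the two-term telescope $0 \to U(\opn{Gr}(M)) \xar{\psi} U(\opn{Gr}(M)) \to M \to 0$ accounts for the ``$+1$'', playing the role of the paper's Koszul-type complex $\Z[t](-1) \xar{t \cd} \Z[t]$. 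The steps you flag as the remaining obstacles are in fact routine: the adjunction passes to $\opn{Ext}$ because $U$ carries a graded-projective resolution of $N$ to an ungraded projective resolution of $U(N)$ and the adjunction isomorphism commutes with the differentials, while the identification $\opn{Gr}(I) \cong \bigoplus_j I(j)$ is a direct check with the twist convention $I(j)_i = I_{i+j}$. What your approach buys is the elimination of derived categories and of the Rees ring altogether; what the paper's approach records explicitly is the stronger Theorem \ref{thm:15}, that $\opn{inj{.}dim}(N) \leq \opn{gr{.}inj{.}dim}(N) + 1$ --- though your argument yields that as well with no extra effort, since in the locally noetherian category $\cat{GrMod} A$ the direct sum $\bigoplus_j N(j)$ has graded-injective dimension at most $\opn{gr{.}inj{.}dim}(N)$.
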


Actually we prove a slightly more general result (Theorem \ref{thm:15}), of 
which Theorem \ref{thm:1} is a special case. As far as we recall, this was also 
proved by Van den Bergh. 

Our proof follows the same strategy as the original proof by Van den Bergh, 
namely passing through the Rees ring. However our proof is somewhat more 
conceptual, in that it isolates the precise ``reason'' for the dimension jump 
(see Lemma \ref{lem:3}, and compare it to Example \ref{exa:15} below). The 
downside of our proof is that is uses derived categories. 

The dimension jump can already be seen in the next easy commutative example. 

\begin{exa} \label{exa:15}
Suppose $\k$ is a field and $A = \k[t]$, the polynomial ring in the variable 
$t$, that has degree $1$. The injective $A$-modules (in the ungraded sense, 
i.e.\ in the category $\cat{Mod} A$) are direct sums of indecomposable 
$A$-modules. The indecomposable injectives are the field of fractions 
$I(0) \cong \k(t)$, and 
the torsion modules $I(\m) \cong \k(t) / A_{\m}$, where $\m$ is a maximal 
ideal of $A$. 

In the category $\cat{GrMod} A$ there are two indecomposable injective objects, 
up to Serre twist:
the module $I(\m_0)$, where $\m_0 := (t)$ is the unique graded maximal 
ideal of $A$; and the graded ring of fractions $J := \k[t, t^{-1}]$. The latter 
is not injective in $\cat{Mod} A$; but it has injective dimension $1$. 
\end{exa}

Observe that projective modules do not have a jump in dimension: 

\begin{prop}
Let $A$ be a $\Z$-graded ring. If $P$ is a graded-projective 
left $A$-module, then $P$ is also a projective $A$-module in the ungraded 
sense. 
\end{prop}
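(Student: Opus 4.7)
The plan is to reduce the statement to the elementary fact that a direct summand of a free module is projective, after showing that graded-projectivity passes to the ungraded category through graded-free modules.

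First I would recall that the category $\cat{GrMod} A$ has enough projectives: any graded module $M$ is a quotient of a graded-free module $F = \bigoplus_{j \in J} A(n_j)$, obtained by choosing a set of homogeneous generators of $M$ and shifting $A$ so that the generators live in degree $0$. The map $F \surj M$ is a morphism in $\cat{GrMod} A$. Consequently, a graded-projective module $P$ admits a splitting $F \cong P \oplus Q$ in $\cat{GrMod} A$ of some surjection from a graded-free $F$ onto $P$.

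Next I would apply the forgetful functor $\cat{GrMod} A \to \cat{Mod} A$. This functor is additive and preserves direct sums. The key observation is that each shift $A(n)$, when stripped of its grading, is isomorphic as an ungraded $A$-module to $A$ itself. Therefore the graded-free module $F = \bigoplus_j A(n_j)$ becomes an ungraded free module $\bigoplus_j A$, which is projective in $\cat{Mod} A$. Since the splitting $F \cong P \oplus Q$ holds in $\cat{GrMod} A$, it also holds in $\cat{Mod} A$; thus $P$ is an ungraded direct summand of a free $A$-module, hence projective.

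There is essentially no obstacle here: the argument is completely formal once one observes that the forgetful functor sends graded-frees to ungraded frees. The contrast with the injective case (Theorem~\ref{thm:1}) is precisely that the forgetful functor does not send graded-injectives to ungraded injectives, because injectivity requires extending morphisms from arbitrary (ungraded) submodules, and such submodules need not be graded. The proposition therefore serves to highlight why the dimension jump is a phenomenon specific to injectivity.
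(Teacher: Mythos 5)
Your proposal is correct and follows essentially the same route as the paper: present $P$ as a direct summand of a graded-free module via a split surjection in $\cat{GrMod} A$, then note that a graded-free module is free after forgetting the grading, so $P$ is an ungraded direct summand of a free module and hence projective. The closing remark contrasting this with the injective case is a nice addition, though not part of the paper's argument.
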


\begin{proof}
Recall that a graded-free $A$-module is a graded module 
$Q \cong \bigoplus_{x \in X} A(n_x)$, where $X$ is some indexing set,
$n_x \in \Z$, and $A(n_x)$ is the Serre twist of $A$ by the integer $n_x$; 
namely $A(n_x)$ is free on a generator in degree $-n_x$. 

We can find a surjection $Q \to P$ in $\cat{GrMod} A$ from such a graded-free 
module $Q$. Since $P$ is graded-projective, this surjection splits
in $\cat{GrMod} A$, so we get 
an isomorphism $Q \cong P \oplus P'$. But $Q$ is free as an ungraded 
$A$-module, and hence $P$ is projective as an ungraded $A$-module, 
\end{proof}

\begin{rem}
We do not know if the noetherian condition in Theorem \ref{thm:1} is necessary. 
\end{rem}

For the proof of Theorem \ref{thm:1} (and Theorem \ref{thm:15}) we need some 
technical tools, that we take from \cite{YZ1, YZ2}. 
For the rest of the paper $A$ is a left noetherian $\N$-graded ring. By default 
all $A$-modules here are left $A$-modules. Let $M, N$ be graded $A$-modules. 
A function $\phi : M \to N$ is said to be an $A$-linear homomorphism of degree 
$i$ if for all $m \in M_j$ and $a \in A$ we have $\phi(m) \subset N_{i + j}$ 
and $\phi(a \cd m) = a \cd \phi(m)$. The set of all such homomorphisms is 
denoted by $\opn{Hom}_{A}^{\mrm{gr}}(M, N)_i$. And we let 
\[ \opn{Hom}_{A}^{\mrm{gr}}(M, N) :=
\bigoplus_{i \in \Z} \opn{Hom}_{A}^{\mrm{gr}}(M, N)_i , \]
which is a graded abelian group. 
Thus 
\[  \opn{Hom}_{\cat{GrMod} A}(M, N) = 
\opn{Hom}_{A}^{\mrm{gr}}(M, N)_0 . \]

We can view $A$ as a filtered ring, with filtration 
$F = \{ F_i(A) \}_{i \in \Z}$ defined by 
$F_i(A) := \bigoplus_{j \leq i} A_j$. 
Let $\til{A}$ be the Rees ring associated to this filtration. Thus 
\begin{equation} \label{eqn:26}
\til{A} := \bigoplus_{i \in \N} F_i(A) \cd t^i \subset A[t] =
A \ot_{\Z} \Z[t] ,
\end{equation}
where $t$ is a variable of degree $1$. 
There is an isomorphism of graded rings 
$\til{A} / t \cd \til{A} \cong A$, and an isomorphism of filtered rings
$\til{A} / (t - 1) \cd \til{A} \cong A$. 
According to \cite[Theorem 8.2]{ATV} the ring $\til{A}$ is left noetherian. 

Given $\til{M} \in \cat{GrMod} \til{A}$, let 
$\opn{sp}_0(\til{M}) := \til{M} / t \cd \til{M}$, which is a graded $A$-module. 
This is an additive functor (specialization)
\[ \opn{sp}_0 : \cat{GrMod} \til{A} \to \cat{GrMod} A . \]
Likewise the operation 
$\opn{sp}_1(\til{M}) := \til{M} / (t - 1) \cd \til{M}$
is an additive functor 
\[ \opn{sp}_1 : \cat{GrMod} \til{A} \to \cat{Mod} A . \]
According to \cite[Lemma 6.3(1)]{YZ1} the functor $\opn{sp}_1$ is exact. 

Let $M \in \cat{Mod} A$. Suppose $F = \{ F_i(M) \}_{i \in \Z}$ is a 
filtration on $M$ that's compatible with the filtration of $A$. 
This means that each $F_i(M)$ is an abelian subgroup of $M$,
$\bigcap_i F_{i}(M) = 0$, $\bigcup_i F_i(M) = M$, and 
$F_i(A) \cd F_j(M) \subset F_{i + j}(M)$. 
Then the Rees module 
\[ \opn{rs}^F(M) := \bigoplus_{i \in \Z} F_i(M) \cd t^i \subset M[t, t^{-1}] 
= M \ot_{\Z} \Z[t, t^{-1}] \]
is a graded module over the ring $\til{A} = \opn{rs}^F(A)$.
There is a canonical isomorphism 
$\opn{sp}_0(\opn{rs}^F(M)) \cong \opn{gr}^F(M)$ 
in $\cat{GrMod} A$, and a canonical isomorphism 
$\opn{sp}_1(\opn{rs}^F(M)) \lb \cong M$ in $\cat{Mod} A$.

A graded $A$-module $M = \bigoplus_{i \in \Z} M_i$
has a filtration $F = \{ F_i(M) \}_{i \in \Z}$ defined by 
$F_i(M) := \bigoplus_{j \leq i} M_j$. 
The corresponding Rees module is denoted by 
$\opn{rs}(M) := \opn{rs}^F(M)$. Thus we get a functor 
\[ \opn{rs} : \cat{GrMod} A \to \cat{GrMod} \til{A} . \]
This functor is exact. There is a canonical isomorphism  
$\opn{sp}_0(\opn{rs}^F(M)) \cong M$
in $\cat{GrMod} A$. 

For an abelian category $\cat{M}$ we have the (unbounded) derived category 
$\cat{D}(\cat{M})$. In it there are the full subcategories 
$\cat{D}^+(\cat{M})$ and $\cat{D}^-(\cat{M})$, whose objects are the complexes
with bounded below and bounded above cohomologies, respectively. 
The category of complexes of objects of $\cat{M}$ is $\cat{C}(\cat{M})$.
There is a localization functor 
$\cat{C}(\cat{M}) \to \cat{D}(\cat{M})$, 
which is the identity on objects, and inverts quasi-isomorphisms. 
We shall consider these abelian categories: 
$\cat{GrMod} \til{A}$, $\cat{GrMod} A$ and $\cat{Mod} A$.

The exact functors $\opn{sp}_1$ and $\opn{rs}$ extend to triangulated functors 
\[ \opn{sp}_1 :  \cat{D}^{}_{}(\cat{GrMod} \til{A}) \to 
\cat{D}^{}_{}(\cat{Mod} A)  \]
and 
\[ \opn{rs} : \cat{D}^{}_{}(\cat{GrMod} A) \to 
\cat{D}^{}_{}(\cat{GrMod} \til{A}) . \]
There is a left derived functor 
\[ \opn{L sp}_0 : \cat{D}^{-}_{}(\cat{GrMod} \til{A}) \to 
\cat{D}^{}_{}(\cat{GrMod} A) . \]
We use graded-projective resolutions to construct it. 
The right derived functors 
\[ \opn{RHom}_{A}^{\mrm{gr}}(-,-) : \cat{D}^{-}_{}(\cat{GrMod} A)^{\mrm{op}} 
\times \cat{D}^{+}_{}(\cat{GrMod} A) \to \cat{D}^{+}_{}(\cat{GrMod} \Z) \]
and 
\[ \opn{RHom}_{\til{A}}^{\mrm{gr}}(-,-) : 
\cat{D}^{-}_{}(\cat{GrMod} \til{A})^{\mrm{op}} 
\times \cat{D}^{+}_{}(\cat{GrMod} \til{A}) \to \cat{D}^{+}_{}(\cat{GrMod} \Z) \]
are constructed either by graded-projective resolutions of the first argument, 
or by graded-injective resolutions of the second argument.

The key to the dimension jump is the next lemma, that reduces the question to 
the graded ring $\Z[t]$. Observe that there is an obvious graded ring 
homomorphism $\Z[t] \to \til{A}$, and the element $t$ is not a zero divisor in 
$\til{A}$ (cf.\ formula (\ref{eqn:26}). 

\begin{lem} \label{lem:3}
The functor $\opn{L sp}_0$ has cohomological dimension $\leq 1$.
More precisely, if the cohomology of 
$\til{M} \in \cat{D}^{\mrm{b}}(\cat{GrMod} \til{A})$
is concentrated in degrees $\{ i_0, \ldots, i_1 \}$, then 
the cohomology of $\opn{L sp}_0(\til{M})$
is concentrated in degrees $\{ i_0 - 1, i_0, \ldots, i_1 \}$.
\end{lem}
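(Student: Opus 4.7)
I would exhibit $\opn{Lsp}_0(\til M)$ as the mapping cone of multiplication by $t$ on $\til M$, and then read off the cohomological range from the long exact sequence of the resulting triangle.

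The setup exploits the following. Since $A = \til A / t \cd \til A$ and, as is visible from (\ref{eqn:26}), the element $t$ is a non-zero-divisor in $\til A$, there is a graded-free resolution
\[ 0 \to \til A(-1) \xrightarrow{t} \til A \to A \to 0 \]
of $A$ over $\til A$, of length $1$. I would then pick a graded-projective resolution $P^{\bullet} \to \til M$ in $\cat{C}(\cat{GrMod} \til A)$, so that by construction $\opn{Lsp}_0(\til M) \cong P^{\bullet} / t \cd P^{\bullet}$ in $\cat{D}(\cat{GrMod} A)$. Each $P^n$ is a direct summand of a graded-free $\til A$-module, hence $t$-torsion-free, so multiplying by $t$ produces a short exact sequence of cochain complexes
\[ 0 \to P^{\bullet}(-1) \xrightarrow{t} P^{\bullet} \to P^{\bullet} / t \cd P^{\bullet} \to 0 . \]
Its image in $\cat{D}(\cat{GrMod} \til A)$ is the distinguished triangle
\[ \til M(-1) \xrightarrow{t} \til M \to \opn{Lsp}_0(\til M) \xrightarrow{+1} . \]

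It then suffices to take cohomology. The long exact sequence is
\[ \cdots \to H^i(\til M)(-1) \xrightarrow{t} H^i(\til M) \to H^i(\opn{Lsp}_0(\til M)) \to H^{i+1}(\til M)(-1) \xrightarrow{t} H^{i+1}(\til M) \to \cdots \]
Under the hypothesis that $H^j(\til M) = 0$ for $j \notin \{i_0, \ldots, i_1\}$, both groups flanking $H^i(\opn{Lsp}_0(\til M))$ vanish whenever $i > i_1$ or $i < i_0 - 1$, forcing $H^i(\opn{Lsp}_0(\til M)) = 0$ there. This yields the concentration in $\{i_0 - 1, i_0, \ldots, i_1\}$ claimed in the lemma, and in particular cohomological dimension $\le 1$. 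I do not anticipate a serious obstacle: the proof is essentially mechanical once the triangle is in place, and the only delicate point is verifying that $t$ acts injectively on graded-projective $\til A$-modules, which is immediate from the explicit description (\ref{eqn:26}) of $\til A$.
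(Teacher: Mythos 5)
Your proof is correct and follows essentially the same route as the paper: both hinge on the short exact sequence $0 \to \til{P}(-1) \xrightarrow{t \cd} \til{P} \to \opn{sp}_0(\til{P}) \to 0$ obtained from $t$ being a non-zero-divisor on graded-projective $\til{A}$-modules. The paper packages this as a quasi-isomorphism with $\til{Q} \ot_{\Z[t]} \til{M}$ for the two-term complex $\til{Q} = \bigl( \Z[t](-1) \xrightarrow{t \cd} \Z[t] \bigr)$, which is just another way of saying that $\opn{Lsp}_0(\til{M})$ is the cone of $t$ on $\til{M}$, exactly as in your long-exact-sequence argument.
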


\begin{proof}
Recall that we constructed $\opn{L sp}_0(\til{M})$ by choosing a resolution 
$\til{P} \to \til{M}$, where $\til{P}$ is a bounded above complex of 
graded-projective $\til{A}$-modules, and letting 
$\opn{Lsp}_0(\til{M}) :=  \opn{sp}_0(\til{P})$. 
Since $t$ is not a zero divisor in $\til{A}$, we obtain a short exact sequence 
\begin{equation} \label{eqn:25}
 0 \to \til{P}(-1) \xar{t \cd}  \til{P} \to \opn{sp}_0(\til{P}) \to 0
\end{equation}
in $\cat{C}_{}(\cat{GrMod} \til{A})$. 

Let $\til{Q} \in \cat{C}^{}_{}(\cat{GrMod} \Z[t])$
be the semi-free complex 
\[ \til{Q} := \bigl( \Z[t](-1) \xar{t \cd } \Z[t] \bigr) \]
concentrated in cohomological degrees $-1$ and $0$.  
Using equation (\ref{eqn:25}) we see that there are quasi-isomorphisms 
\[ \til{Q} \ot_{\Z[t]} \til{M} \leftarrow 
\til{Q} \ot_{\Z[t]} \til{P} 
\to \opn{sp}_0(\til{P}) \]
in $\cat{C}(\cat{GrMod} \Z[t])$. The bounds on the cohomology
of the complex $\til{Q} \ot_{\Z[t]} \til{M}$ are clear. 
\end{proof}

\begin{lem} \label{lem:1}
Take $\til{M} \in \cat{D}^{-}_{\mrm{f}}(\cat{GrMod} \til{A})$ and 
$N \in \cat{D}^{+}_{}(\cat{GrMod} A)$. There is an isomorphism 
\[ \opn{RHom}_{\til{A}}^{\mrm{gr}} \bigl( \til{M}, \opn{rs}(N) \bigr) \cong 
\opn{rs} \bigl( \opn{RHom}_{A}^{\mrm{gr}}( \opn{L sp}_0 (\til{M}), N) 
\bigr) \]
in $\cat{D}^{}_{}(\cat{GrMod} \Z[t])$. It is functorial in $\til{M}$ and $N$. 
\end{lem}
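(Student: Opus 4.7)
My plan is to compute both sides of the claimed isomorphism using a single well-chosen resolution of $\til{M}$. Since $\til{A}$ is left noetherian and $\til{M}$ has finitely generated cohomology bounded above, there exists a quasi-isomorphism $\til{P} \to \til{M}$ in $\cat{C}(\cat{GrMod} \til{A})$, with $\til{P}$ bounded above and each $\til{P}^i$ a finitely generated graded-free $\til{A}$-module, say $\til{P}^i = \bigoplus_{\al} \til{A}(k^i_\al)$ with finite index sets. By construction, $\opn{L sp}_0(\til M) \cong \opn{sp}_0(\til P)$. Because $t$ is central in $\til{A}$, the functor $\opn{sp}_0$ sends graded-free $\til{A}$-modules to graded-free $A$-modules (with $\opn{sp}_0(\til A(k)) = A(k)$), so $\opn{sp}_0(\til P)$ is a bounded above complex of graded-projective $A$-modules. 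Hence
\[
\opn{RHom}_{\til A}^{\mrm{gr}}\bigl(\til M, \opn{rs}(N)\bigr) \cong \opn{Hom}_{\til A}^{\mrm{gr}}\bigl(\til P, \opn{rs}(N)\bigr) \quad \text{and} \quad \opn{RHom}_A^{\mrm{gr}}\bigl(\opn{Lsp}_0(\til M), N\bigr) \cong \opn{Hom}_A^{\mrm{gr}}\bigl(\opn{sp}_0(\til P), N\bigr)
\]
in the appropriate derived categories.

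The core of the proof is then to exhibit a natural isomorphism of complexes of graded $\Z[t]$-modules
\[
\al_{\til P, N}\colon \opn{Hom}_{\til A}^{\mrm{gr}}\bigl(\til P, \opn{rs}(N)\bigr) \iso \opn{rs}\bigl(\opn{Hom}_A^{\mrm{gr}}(\opn{sp}_0(\til P), N)\bigr).
\]
Both sides are contravariant and additive in $\til P$, and $\opn{rs}$ commutes with finite direct sums, so termwise the construction reduces to the model case $\til P = \til A(k)$. In that case, the degree $n$ piece of the left-hand side is canonically $\opn{rs}(N)_{n-k} = F_{n-k}(N) = \bigoplus_{j \leq n-k} N_j$, while the degree $n$ piece of the right-hand side is $F_n\bigl(\opn{Hom}_A^{\mrm{gr}}(A(k), N)\bigr) = F_n(N(-k)) = \bigoplus_{j \leq n-k} N_j$. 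The same canonical inclusions implement the $\Z[t]$-action (by $t$) on both sides, so the identifications agree. Additivity in $\til P$ (using finiteness of the graded-free components) and bifunctoriality of the Hom-complex provide compatibility with the differentials.

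Passing back to the derived category, $\al_{\til P, N}$ and the two identifications above yield the desired isomorphism. Functoriality in $\til M$ follows from the fact that any two such resolutions are homotopy equivalent via maps of complexes of graded-projectives, and functoriality in $N$ is evident from the construction.

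The main obstacle I expect is the bookkeeping in the model case $\til P = \til A(k)$: one must verify that the two natural identifications with $F_{n-k}(N)$ coincide as subgroups and are compatible not only with the $\Z[t]$-action but also with the $\til A$-linearity and grading conventions used throughout. Once this verification is in place, the assembly into an isomorphism of complexes and its descent to the derived category is formal.
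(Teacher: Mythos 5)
Your proposal is correct and follows essentially the same route as the paper's proof: resolve $\til{M}$ by a bounded above complex of finite rank graded-free $\til{A}$-modules, observe that $\opn{sp}_0$ of such a complex is again of that type over $A$, reduce by additivity and functoriality to the model case $\til{P} = \til{A}(k)$, and identify both sides with $\opn{rs}(N(-k))$. The only difference is cosmetic — you verify the degree-$n$ pieces and the $t$-action directly, while the paper phrases the same computation via the twist isomorphisms.
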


\begin{proof}
Any such $\til{M}$ admits a quasi-isomorphism $\til{P} \to \til{M}$, where 
$\til{P}$ is a bounded above complex of finite rank graded-free 
$\til{A}$-modules. There are isomorphisms 
\begin{equation} \label{eqn:15}
\opn{RHom}_{\til{A}}^{\mrm{gr}} \bigl( \til{M}, \opn{rs}(N) \bigr) \cong 
\opn{Hom}_{\til{A}}^{\mrm{gr}} \bigl( \til{P}, \opn{rs}(N) \bigr)
\end{equation}
and 
\begin{equation} \label{eqn:16}
\opn{rs} \bigl( \opn{RHom}_{A}^{\mrm{gr}}( \opn{L sp}_0 (\til{M}), N) \cong
\opn{rs} \bigl( \opn{Hom}_{A}^{\mrm{gr}}( \opn{sp}_0 (\til{P}), N) 
\end{equation}
in $\cat{D}^{}_{}(\cat{GrMod} \Z[t])$, functorial in $\til{P}$. We use the fact 
that $\opn{sp}_0 (\til{P})$ is a bounded above complex of finite rank 
graded-free $A$-modules.

If $\til{P}$ is a single graded-free $\til{A}$-module of rank $1$, i.e.\ 
$\til{P} \cong \til{A}(i)$ for some integer $i$, 
and if $N$ is a single graded $A$-module, then there are isomorphisms
\[ \opn{Hom}_{\til{A}}^{\mrm{gr}} \bigl( \til{P}, \opn{rs}(N) \bigr) \cong 
\opn{rs}(N)(-i) \cong \opn{rs} (N(-i)) \]
and
\[ \opn{rs} \bigl( \opn{Hom}_{A}^{\mrm{gr}}( \opn{sp}_0 (\til{P}), N) \bigr) 
\cong  
\opn{rs} \bigl( \opn{Hom}_{A}^{\mrm{gr}}( A(i), N) \bigr) \cong 
\opn{rs} (N(-i)) \]
in $\cat{GrMod} \Z[t]$. Since these isomorphisms are functorial in the modules 
$\til{P}$ and $N$, we get an isomorphism
\[ \opn{Hom}_{\til{A}}^{\mrm{gr}} \bigl( \til{P}, \opn{rs}(N) \bigr) \cong 
\opn{rs} \bigl( \opn{Hom}_{A}^{\mrm{gr}}( \opn{sp}_0 (\til{P}), N) \bigr) \]
in $\cat{C}^{}_{}(\cat{GrMod} \Z[t])$ for any bounded above complex of finite 
rank graded-free $\til{A}$-modules $\til{P}$, and any bounded below complex
of graded $A$-modules $N$.
Finally we use the isomorphisms (\ref{eqn:15}) and (\ref{eqn:16}).
\end{proof}

\begin{lem} \label{lem:2}
Let $\til{M} \in \cat{D}^{-}_{\mrm{f}}(\cat{GrMod} \til{A})$
and $\til{N} \in \cat{D}^{+}_{}(\cat{GrMod} \til{A})$.
There is an isomorphism 
\[   \opn{sp}_1 \bigl( \opn{RHom}_{\til{A}}^{\mrm{gr}} 
(\til{M}, \til{N}) \bigr) \cong 
\opn{RHom}_{A}^{} \bigl( \opn{sp}_1(M), \opn{sp}_1(N) \bigr) \]
in $\cat{D}^{}_{}(\cat{Mod} \Z)$. It is functorial in $\til{M}$ and $\til{N}$.
\end{lem}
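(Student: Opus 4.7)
My plan is to mirror the proof of Lemma \ref{lem:1}, replacing $\opn{sp}_0$ by $\opn{sp}_1$ and $\opn{rs}(N)$ by the arbitrary complex $\til{N}$. The crucial new ingredient is the \emph{exactness} of $\opn{sp}_1$, which lets us avoid any derived functor on the right-hand side.

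First I would choose a quasi-isomorphism $\til{P} \to \til{M}$ in which $\til{P}$ is a bounded above complex of finite-rank graded-free $\til{A}$-modules; such $\til{P}$ exists because $\til{A}$ is left noetherian and $\til{M}$ has bounded above, finitely generated cohomology. With this resolution, the left-hand side becomes $\opn{sp}_1\bigl(\opn{Hom}_{\til{A}}^{\mrm{gr}}(\til{P}, \til{N})\bigr)$. For the right-hand side, since $\opn{sp}_1$ is exact the induced map $\opn{sp}_1(\til{P}) \to \opn{sp}_1(\til{M})$ is a quasi-isomorphism, and since $\opn{sp}_1(\til{A}(i)) \cong A$ in $\cat{Mod} A$, the complex $\opn{sp}_1(\til{P})$ is a bounded above complex of free $A$-modules. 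Hence $\opn{RHom}_A\bigl(\opn{sp}_1(\til{M}), \opn{sp}_1(\til{N})\bigr) \cong \opn{Hom}_A\bigl(\opn{sp}_1(\til{P}), \opn{sp}_1(\til{N})\bigr)$.

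The heart of the proof is then to construct a natural chain-level isomorphism
\[ \opn{sp}_1 \bigl( \opn{Hom}_{\til{A}}^{\mrm{gr}}(\til{P}, \til{N}) \bigr) \iso \opn{Hom}_A \bigl( \opn{sp}_1(\til{P}), \opn{sp}_1(\til{N}) \bigr) , \]
functorial in $\til{P}$ and $\til{N}$. The candidate map sends (the class of) a graded homomorphism $\phi$ to its specialization at $t = 1$. Checking that this is an isomorphism reduces, as in Lemma \ref{lem:1}, to the case $\til{P} = \til{A}(i)$: here the left-hand side is $\opn{sp}_1(\til{N}(-i))$, which coincides with $\opn{sp}_1(\til{N})$ as an ungraded $A$-module (the Serre twist washes out modulo $t-1$); and the right-hand side is $\opn{Hom}_A(A, \opn{sp}_1(\til{N})) \cong \opn{sp}_1(\til{N})$, with the comparison map becoming the identity.

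The main obstacle will be bookkeeping: I need to verify that the chain-level comparison map is natural in both variables, compatible with finite direct sums and degree shifts, and therefore extends from the rank-one case to arbitrary bounded above complexes of finite-rank graded-free $\til{A}$-modules $\til{P}$ and arbitrary bounded below $\til{N}$. Once this is in place, chaining the three identifications yields the desired isomorphism in $\cat{D}(\cat{Mod} \Z)$.
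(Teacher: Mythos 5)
Your proposal is correct, but it takes a genuinely different route from the paper: the paper disposes of this lemma in two lines by citing \cite[Lemma 6.3(3)]{YZ1} and observing that the way-out argument given there still applies under the weaker boundedness hypotheses, whereas you give a direct, self-contained proof modelled on Lemma \ref{lem:1} --- resolve $\til{M}$ by a bounded above complex $\til{P}$ of finite-rank graded-free $\til{A}$-modules, build a chain-level comparison map $\phi \mapsto \opn{sp}_1(\phi)$, and verify it is an isomorphism on the rank-one pieces $\til{A}(i)$, where both sides become $\opn{sp}_1(\til{N})$. All the ingredients you invoke are available: $\til{A}$ is left noetherian so the resolution exists; $t-1$ is central enough that a graded $\til{A}$-linear map descends to $\opn{sp}_1$, so the comparison map is well defined; exactness of $\opn{sp}_1$ makes $\opn{sp}_1(\til{P}) \to \opn{sp}_1(\til{M})$ a quasi-isomorphism into a bounded above complex of free $A$-modules, so the right-hand side is genuinely computed by $\opn{Hom}_A(\opn{sp}_1(\til{P}), \opn{sp}_1(\til{N}))$. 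The one point worth making explicit in your ``bookkeeping'' step is why the reduction to rank one is legitimate for the total Hom complex: each term $\prod_p \opn{Hom}(\til{P}^p, \til{N}^{p+n})$ is a \emph{finite} product precisely because $\til{P}$ is bounded above and $\til{N}$ is (represented by a complex that is) bounded below, and $\opn{sp}_1$, being a quotient functor, commutes with finite products but not infinite ones; similarly $\opn{Hom}^{\mrm{gr}}_{\til{A}}$ agrees with ordinary $\opn{Hom}_{\til{A}}$ here only because each $\til{P}^p$ is finitely generated. With those two remarks added, your argument is complete, and it has the advantage of making the note independent of \cite{YZ1} at this point; the paper's citation is shorter but outsources the real work.
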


\begin{proof}
This is \cite[Lemma 6.3(3)]{YZ1}. The statement there talks about complexes
$\til{M}, \til{N}$ in $\cat{D}^{\mrm{b}}_{\mrm{f}}(\cat{GrMod} \til{A})$; but 
the proof (the way-out argument) works also for the slightly weaker assumptions 
that we have here. 
\end{proof}

For a graded $A$-module $N$ we denote by 
$\opn{gr{.}inj{.}dim} (N)$ its injective dimension in $\cat{GrMod} A$. 
Thus $I$ is graded-injective iff $\opn{gr{.}inj{.}dim} (I) = 0$. 
Likewise, for an $A$-module $N$  we denote by $\opn{inj{.}dim}(N)$ its 
injective dimension in $\cat{Mod} A$. 

\begin{thm}[Van den Bergh] \label{thm:15}
Assume $A$ is a left noetherian $\N$-graded ring. For any 
graded $A$-module $N$ we have 
\[ \opn{inj{.}dim}(N) \leq \opn{gr{.}inj{.}dim} (N) + 1 . \]
\end{thm}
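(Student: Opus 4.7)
The plan is as follows. Let $N$ be a graded $A$-module with $d := \opn{gr{.}inj{.}dim}(N)$; I aim to show $\opn{inj{.}dim}(N) \leq d+1$. Since $A$ is left noetherian, Baer's criterion (applied to the cokernel of the $d$-th coboundary in an injective resolution of $N$) reduces the problem to showing $\opn{Ext}^i_A(M, N) = 0$ for all $i > d+1$ and every finitely generated ungraded $A$-module $M$.

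Given such an $M$, I equip it with a good filtration $F$ compatible with the filtration of $A$ (generated by lifts of a finite generating set assigned arbitrary degrees). Then $\til M := \opn{rs}^F(M)$ is a finitely generated graded $\til A$-module, concentrated in cohomological degree $0$, with $\opn{sp}_1(\til M) \cong M$. Set also $\til N := \opn{rs}(N)$, so that $\opn{sp}_1(\til N) \cong N$. Lemma \ref{lem:2} then gives
\[
\opn{sp}_1 \bigl( \opn{RHom}_{\til A}^{\mrm{gr}}(\til M, \til N) \bigr)
\cong \opn{RHom}_A(M, N) ,
\]
while Lemma \ref{lem:1} rewrites the interior as
\[
\opn{RHom}_{\til A}^{\mrm{gr}}(\til M, \til N)
\cong \opn{rs} \bigl( \opn{RHom}_A^{\mrm{gr}}(\opn{Lsp}_0(\til M), N) \bigr) .
\]
Since $\opn{sp}_1 \circ \opn{rs}$ returns the underlying ungraded object, combining these yields the key identification
\[
\opn{RHom}_A(M, N) \cong \opn{RHom}_A^{\mrm{gr}}(\opn{Lsp}_0(\til M), N)
\]
in $\cat{D}(\cat{Mod} \Z)$.

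Now I bound the cohomological amplitude of the right-hand side. By Lemma \ref{lem:3}, the complex $\opn{Lsp}_0(\til M)$ has cohomology concentrated in degrees $\{-1, 0\}$. The truncation triangle
\[
H^{-1}(\opn{Lsp}_0(\til M))[1] \to \opn{Lsp}_0(\til M) \to H^0(\opn{Lsp}_0(\til M))
\]
in $\cat{D}(\cat{GrMod} A)$, after applying $\opn{RHom}_A^{\mrm{gr}}(-, N)$ and invoking $\opn{gr{.}inj{.}dim}(N) = d$, has its two outer terms supported in cohomological degrees $[0, d]$ and $[1, d+1]$ respectively (the $[1]$-shift on $H^{-1}$ producing the top of the range $d+1$). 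Hence $\opn{RHom}_A^{\mrm{gr}}(\opn{Lsp}_0(\til M), N)$ has cohomology in $[0, d+1]$, and therefore $\opn{Ext}^i_A(M, N) = 0$ for $i > d+1$, as desired.

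I expect the main obstacle to be the careful bookkeeping of the Rees and specialization functors at the level of derived categories --- specifically, verifying that Lemmas \ref{lem:1} and \ref{lem:2} compose cleanly as above, that $\opn{sp}_1 \circ \opn{rs}$ really does recover the ungraded object at the level of derived cohomology, and that the good filtration on $M$ produces a finitely generated Rees module over $\til A$. Conceptually, the ``$+1$'' in the injective-dimension jump is attributable precisely to the ``$-1$'' appearing in Lemma \ref{lem:3}.
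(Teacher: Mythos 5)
Your proposal is correct and follows essentially the same route as the paper's own proof: reduce via noetherianity to $\opn{Ext}$-vanishing against finitely generated modules, pass to the Rees ring with a good filtration, and chain Lemmas \ref{lem:2}, \ref{lem:1} and \ref{lem:3} in exactly this order, with the ``$+1$'' coming from the degree $-1$ cohomology of $\opn{Lsp}_0(\til{M})$. The only (immaterial) difference is the final amplitude bound, which you obtain from the truncation triangle for $\opn{Lsp}_0(\til{M})$ rather than from a length-$d$ graded-injective resolution of $N$ as in the paper.
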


\begin{proof}
Take a graded $A$-module $N$, and let $d := \opn{gr{.}inj{.}dim} (N)$. 
We may assume that $d < \infty$. 
It is enough to prove that $\opn{Ext}^q_A(M, N) = 0$ for every 
$q > d + 1$ and every finitely generated $A$-module $M$. 

Let $M$ be a finitely generated $A$-module. Let $F$ be a good filtration on 
$M$, namely a filtration such that the graded module 
$\opn{gr}^F(M)$ is finitely generated over $A$. 
Such filtrations exist by \cite[Lemma 7.6.11]{MR}. 
Then (by the graded Nakayama Lemma) the Rees module 
$\til{M} := \opn{rs}^F(M)$ is finitely generated over $\til{A}$.

Consider the graded $\til{A}$-module $\til{N} := \opn{rs}(N)$. 
Since $M \cong  \opn{sp}_1 (\til{M})$ and 
$N \cong  \opn{sp}_1 (\til{N})$ in $\cat{Mod} A$, Lemma \ref{lem:2} says that 
it is enough to prove that  
\[ \mrm{H}^q (\opn{RHom}_{\til{A}}^{\mrm{gr}} (\til{M}, \til{N})) = 0 \]
for all $q > d + 1$. 
By Lemma \ref{lem:1} this is equivalent to proving that
\[ \mrm{H}^q \bigl( \opn{RHom}_{A}^{\mrm{gr}}( \opn{Lsp}_0 (\til{M}), N) 
\bigr) = 0 \]
for $q > d + 1$. Let us take a quasi-isomorphism $N \to I$ 
in $\cat{GrMod} A$, such that $I$ is a complex of graded-injective modules 
concentrated in degrees $\{ 0, \ldots, d \}$. 
We have to prove that 
\[ \mrm{H}^q \bigl( \opn{Hom}_{A}^{\mrm{gr}}( \opn{Lsp}_0 (\til{M}), I) 
\bigr) = 0 \]
for $q > d + 1$.
For that is suffices to show that 
$\mrm{H}^p(\opn{L sp}_0 (\til{M})) = 0$ for $p < -1$. 
This is true by Lemma \ref{lem:3}.
\end{proof}

\medskip \noindent
{\bf Acknowledgments}.  
I wish to thank Michel Van den Bergh for telling me (a long time ago) about his 
result and its original proof. Thanks also to Rishi Vyas for reading an earlier 
version of this note and correcting several mistakes.

\end{document}